\newtheorem{theorem}{Theorem}[section]
\newtheorem{lemma}[theorem]{Lemma}
\newtheorem{corollary}[theorem]{Corollary}
\theoremstyle{definition}
\newtheorem*{remark}{Remarks}
\title{Factorials and Legendre's three-square theorem}
\author{Rob Burns}
\begin{document}
\maketitle
\begin{abstract}
We provide a necessary and sufficient condition for $n!$ to be a sum of three squares. The condition is based on the binary representation of $n$ and can be expressed by the operation of an automaton.
\end{abstract}

\section{Introduction}
\label{intro}

For any positive integer $n > 1$, we know that $n!$ cannot be a perfect square since, by Bertrand's Postulate, there is a prime $p$ between $n/2$ and $n$. The highest power of $p$ dividing $n!$ must then be $1$. We also know that, apart from a few exceptions, $n!$ cannot be written as the sum of two squares. This is a consequence of two results. The first result is the well known \emph{Sum of two squares theorem} \cite[Thm~366]{hw2008} which states:

\bigskip

\begin{theorem}[Sum of two squares theorem] 
\label{sumoftwosquares}
An integer greater than one can be written as a sum of two squares if and only if its prime decomposition contains no term $p^k$, where $p$ is a prime with $p \equiv 3 \pmod 4$ and $k$ is an odd number.
\end{theorem}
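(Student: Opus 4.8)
The plan is to establish the two implications separately, letting the multiplicative structure of the sums of two squares carry most of the bookkeeping. The crucial algebraic fact is the Brahmagupta--Fibonacci identity $(a^2+b^2)(c^2+d^2)=(ac-bd)^2+(ad+bc)^2$, which shows that the set $S$ of nonnegative integers representable as a sum of two squares is closed under multiplication. (Equivalently, one can phrase everything in the Gaussian integers $\mathbb{Z}[i]$, where $n\in S$ iff $n$ is a norm, and the two directions become statements about how rational primes factor in $\mathbb{Z}[i]$; I will stick with the elementary identity.) I also record two standard prime-level inputs: (i) Fermat's theorem that every prime $p\equiv 1\pmod 4$ lies in $S$, together with the trivial $2=1^2+1^2$; and (ii) for a prime $q\equiv 3\pmod 4$, the residue $-1$ is a quadratic non-residue modulo $q$, by Euler's criterion $(-1)^{(q-1)/2}=-1$.

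For the ``only if'' direction, suppose $n=a^2+b^2$ and let $q\equiv 3\pmod 4$ be a prime dividing $n$. I claim $q\mid a$ and $q\mid b$: otherwise $a$ (say) is invertible modulo $q$, and then $(ba^{-1})^2\equiv -1\pmod q$, contradicting (ii). Hence $q^2\mid n$ and $n/q^2=(a/q)^2+(b/q)^2$ is a smaller sum of two squares; iterating this (formally, by induction on the $q$-adic valuation of $n$) shows that $q$ divides $n$ to an even power. Applying this to every prime $q\equiv 3\pmod 4$ yields the stated necessary condition.

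For the ``if'' direction, write the prime factorisation as $n=2^{e}\prod_i p_i^{a_i}\prod_j q_j^{2b_j}$ with $p_i\equiv 1\pmod 4$ and $q_j\equiv 3\pmod 4$. By input (i), the factor $2$ and each $p_i$ lie in $S$; each $q_j^{2b_j}=(q_j^{b_j})^2+0^2$ lies in $S$ trivially; and any exact power appearing can be absorbed by repeated multiplication. Since $S$ is closed under multiplication, $n$ itself lies in $S$, completing the equivalence.

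The step that genuinely requires work is input (i), Fermat's two-square theorem. I would prove it via Thue's lemma: pick $t$ with $t^2\equiv -1\pmod p$ (which exists precisely because $p\equiv 1\pmod 4$, again by Euler's criterion), and by the pigeonhole principle find integers $x,y$, not both zero, with $0\le x,y<\sqrt p$ and $x\equiv t y\pmod p$; then $p\mid x^2+y^2$ while $0<x^2+y^2<2p$, forcing $x^2+y^2=p$. (Fermat's infinite-descent argument, or the one-line involution proof, would do equally well.) Everything else — the identity, the quadratic-residue computations, and the two valuation-descents — is routine.
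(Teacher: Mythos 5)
The paper does not actually prove this statement: it is quoted verbatim as Theorem 366 of Hardy and Wright and used purely as a black box (together with Erd{\"o}s's theorem) to recall why $n!$ is almost never a sum of two squares, so there is no in-paper argument to compare yours against. That said, your proposal is a correct and complete outline of the standard proof. The Brahmagupta--Fibonacci identity gives closure of the representable set under multiplication; the descent at a prime $q\equiv 3\pmod 4$, using that $-1$ is a quadratic non-residue modulo $q$, forces $q\mid a$ and $q\mid b$ and hence an even $q$-adic valuation, which is the ``only if'' direction; and the ``if'' direction reduces to $2=1^2+1^2$, to $q^{2b}=(q^b)^2+0^2$, and to Fermat's two-square theorem for $p\equiv 1\pmod 4$, which you correctly identify as the one nontrivial input and for which your Thue-lemma sketch is sound. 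The only cosmetic point is that the pigeonhole argument naturally produces $|x|,|y|\le\lfloor\sqrt p\rfloor$ rather than $0\le x,y<\sqrt p$ (the $x,y$ arise as differences and may be negative), which is harmless since only $x^2+y^2$ enters. Your route is essentially the classical elementary one; the cited source reaches the same theorem via the arithmetic of the Gaussian integers, as you note in your parenthetical, and nothing substantive is gained or lost either way.
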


\bigskip

The second result was proved by Erd{\"o}s in 1935.\cite{Erd_s_1935}

\bigskip

\begin{theorem}[Erd{\"o}s] 
\label{Erdos}
If $n$ is a positive integer $\geq 7$ then there is a prime $p$ of the form $p \equiv 3 \pmod 4$ with $n/2 < p \leq n$.
\end{theorem}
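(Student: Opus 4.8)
I would split the argument into a ``large-$n$'' part and a finite check: first prove that there is a constant $N_0$ for which some prime $p \equiv 3 \pmod 4$ with $n/2 < p \le n$ exists for every $n \ge N_0$, and then verify the remaining cases $7 \le n < N_0$ directly by listing the primes $\equiv 3 \pmod 4$ below $N_0$, which is routine.

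For the ``large-$n$'' part the cleanest route is an effective prime-number-theorem estimate for the progression $3 \bmod 4$. With $\theta(x;4,3) = \sum_{p \le x,\ p \equiv 3\,(4)}\log p$, an explicit bound of the shape $|\theta(x;4,3) - x/2| \le \varepsilon x$ for $x \ge x_0$, with explicit $\varepsilon < 1/6$ and explicit $x_0$ (such bounds are available in the literature), yields $\theta(n;4,3) - \theta(n/2;4,3) \ge n/4 - \tfrac{3}{2}\varepsilon n > 0$, hence a prime $p \equiv 3 \pmod 4$ in $(n/2,n]$ whenever $n \ge 2x_0$. A fully elementary alternative, in the style of Erd{\"o}s's 1935 argument, would be: assume for contradiction that no such prime exists for some large even $n = 2m$ and examine the prime factorization of $\binom{2m}{m}$. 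Here $4^m/(2m+1) \le \binom{2m}{m}$; by Legendre's formula every prime power $p^{v}$ dividing it satisfies $p^{v} \le 2m$; no prime in $(2m/3, m]$ divides it; the part coming from primes $\le \sqrt{2m}$ is at most $(2m)^{\sqrt{2m}}$; and $\prod_{p \le x} p \le 4^x$ controls the rest — while the contradiction hypothesis forces every prime factor of $\binom{2m}{m}$ in $(m, 2m]$ to be $\equiv 1 \pmod 4$.

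The step I expect to be the real obstacle is making that last condition do any work. A bare count does not suffice: Chebyshev-type inequalities are entirely consistent with every prime of $(m,2m]$ lying in the class $1 \bmod 4$, so one cannot bound $\prod_{m < p \le 2m} p$ below its true size. What one needs is an auxiliary integer whose prime divisors are confined to a single residue class mod $4$ — for instance $\prod_{k=1}^{m}(k^2+1)$, all of whose odd prime divisors are $\equiv 1 \pmod 4$ and which is divisible by every prime $q \equiv 1 \pmod 4$ with $q \le 2m$ (take $k$ to be the smaller square root of $-1$ modulo $q$, which is at most $(q-1)/2 < m$) — whose size can then be played off against that of $\binom{2m}{m}$ using the sum-of-two-squares theorem. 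Constructing the right such object, bookkeeping the exponents honestly (the boundary primes near $m$ and near $2m$ need care), and squeezing out a numerical contradiction valid for all $n \ge N_0$ is the technical heart of the matter — essentially the content of Erd{\"o}s's paper — and that is where I would expect most of the effort to go.
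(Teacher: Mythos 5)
First, a point of comparison that matters here: the paper does not prove this theorem at all. It is stated as a known result and attributed to Erd{\"o}s's 1935 paper via a citation, so there is no internal proof to measure your attempt against; any self-contained argument you supply is already more than the paper offers. Judged on its own terms, your first route --- an explicit bound $|\theta(x;4,3)-x/2|\le\varepsilon x$ for $x\ge x_0$ with $\varepsilon<1/6$, the arithmetic $\theta(n;4,3)-\theta(n/2;4,3)\ge n/4-\tfrac{3}{2}\varepsilon n>0$, and a finite check below $2x_0$ --- is a correct and complete proof strategy; bounds of the required quality do exist (e.g.\ Ramar\'e--Rumely), and the only cost is that the entire analytic content is imported from the literature, which is no worse than what the paper itself does.

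The genuine gap is in the elementary route, and it is larger than ``bookkeeping.'' You correctly name the two ingredients of Erd{\"o}s's argument: the factorization bounds on $\binom{2m}{m}$, which under the contradiction hypothesis force the product of the primes in $(m,2m]$ (all assumed $\equiv 1 \pmod 4$) to exceed roughly $4^{m/3}\big/\bigl((2m+1)(2m)^{\sqrt{2m}}\bigr)$, and the auxiliary product $\prod_{k=1}^{m}(k^2+1)$, whose odd prime divisors are all $\equiv 1 \pmod 4$. But the obvious way to play these off against each other --- every prime of $(m,2m]$ divides the auxiliary product, hence $\prod_{m<p\le 2m}p\le\prod_{k\le m}(k^2+1)<(m^2+1)^m$ --- pits a lower bound of size roughly $e^{0.46m}$ against an upper bound of size roughly $e^{2m\log m}$, i.e.\ yields no contradiction at all. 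To make the auxiliary product bite, one must account for its full factorization: the primes $q\le m$ with $q\equiv 1\pmod 4$ divide it to multiplicity about $2m/q$, the primes up to about $m^2$ also contribute substantially, and one needs lower bounds on these contributions (which themselves require nontrivial input about the distribution of primes $\equiv 1\pmod 4$) precise enough that an extra $\tfrac{2}{3}m\log 2$ worth of primes in $(m,2m]$ cannot be accommodated. That decisive inequality is exactly the step you defer, and without it the elementary route is a statement of intent rather than a proof; the finite verification for small $n$ is likewise asserted but not performed.
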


Using theorem \ref{sumoftwosquares} and the same reasoning as for Bertrand's Postulate, Erd{\"o}s concluded that the only factorials that can be written as a sum of two squares are $1!$, $2!$ and $6!$.

On the other hand, Lagrange's four-square theorem, states that every natural number, including factorials,  can be represented as the sum of four integer squares.\cite[Thm~369]{hw2008}

The purpose of this paper is to provide a necessary and sufficient condition for a factorial be written as the sum of three squares. Entry A084953 in The Online Encyclopaedia of Integer Sequences includes a list of those $n!$ which cannot be written as a sum of three squares.\cite{oeis} There is no obvious pattern in the list. In the next section we will derive a condition that determines when $n!$ can be written as a sum of three squares. The last section of the paper describes an automaton that takes the binary representation of $n$ as input and decides whether $n!$ can be written as the sum of three squares.

\section{Writing factorials as the sum of three squares}
\label{main}
Our starting point is Legendre's three square theorem.\cite[Thm~9.8]{Takloo_Bighash_2018}

\bigskip

\begin{theorem}[Sum of three squares theorem] 
\label{sumofthreesquares}
A positive integer can be represented as the sum of three squares of integers if and only if it is not of the form $4^a(8b+7)$ for integers $a, b \geq 0$.
\end{theorem}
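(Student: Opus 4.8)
The plan is to prove the two implications separately. The forward (``only if'') direction is elementary: every square is $\equiv 0$, $1$ or $4\pmod 8$, so any sum of three squares is $\not\equiv 7\pmod 8$, and in particular $8b+7$ is never a sum of three squares. To absorb the factor $4^a$ I would use descent: if $x^2+y^2+z^2=4m$ then, reducing modulo $4$ (where squares are $0$ or $1$), all of $x,y,z$ are even, so $(x/2)^2+(y/2)^2+(z/2)^2=m$; iterating $a$ times, if $4^a(8b+7)$ were a sum of three squares then so would $8b+7$ be, which is impossible. Hence no integer of the form $4^a(8b+7)$ is a sum of three squares.

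For the converse, suppose $n$ is not of that form; we want $n=x^2+y^2+z^2$ with $x,y,z\in\mathbb{Z}$. First I would reduce to the case that $n$ is squarefree: writing $n=4^a m$ with $4\nmid m$ and then $m=d^2 n_0$ with $n_0$ squarefree (so $d$ is odd), and using that an odd square is $\equiv 1\pmod 8$, one checks that $n$ is eligible iff $n_0\not\equiv 7\pmod 8$, i.e.\ iff $n_0$ is eligible, while any representation of $n_0$ scales up to one of $n$. So assume $n$ squarefree; then $n\not\equiv 0\pmod 4$ automatically, and the hypothesis becomes simply $n\not\equiv 7\pmod 8$. The argument now splits into two steps: (i) show $n$ is a sum of three squares of \emph{rational} numbers, and (ii) upgrade this to an integral representation. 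Step (ii) is purely formal, via the Davenport--Cassels lemma: for every $\mathbf{v}\in\mathbb{Q}^3$ there is a lattice point $\mathbf{w}\in\mathbb{Z}^3$ with $\lvert\mathbf{v}-\mathbf{w}\rvert^2\le 3/4<1$ (round each coordinate to the nearest integer), and this ``nearest-point'' property forces the positive-definite, $\mathbb{Z}$-valued form $x^2+y^2+z^2$ to represent over $\mathbb{Z}$ every integer it represents over $\mathbb{Q}$.

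Everything therefore comes down to step (i), which I expect to be the main obstacle. The cleanest route is the Hasse--Minkowski local--global principle: $n$ is a sum of three rational squares iff it is one over $\mathbb{R}$ (immediate, since $n>0$) and over every $\mathbb{Q}_p$; for odd $p$ the ternary form $x^2+y^2+z^2$ is isotropic, hence universal, over $\mathbb{Q}_p$, and over $\mathbb{Q}_2$ a short computation with squares modulo $8$ shows it represents every $2$-adic number except those equal to a $2$-adic square times a unit $\equiv 7\pmod 8$ --- a class which our squarefree $n\not\equiv 7\pmod 8$ avoids. A more hands-on alternative, avoiding Hasse--Minkowski, is the classical argument of Gauss and Dirichlet: using Dirichlet's theorem on primes in arithmetic progressions, choose an auxiliary prime in a carefully chosen residue class so that $-n$ becomes a quadratic residue modulo it and an associated integer is a sum of two squares by Theorem~\ref{sumoftwosquares}, then read off the third coordinate. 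Either way, the one unavoidable deep ingredient is Dirichlet's theorem on primes in arithmetic progressions (it also enters the proof of Hasse--Minkowski over $\mathbb{Q}$), and pinning down the correct congruence conditions on the auxiliary prime, separately for $n\equiv 1,2,3,5,6\pmod 8$, is the fussy part of writing the proof out in full.
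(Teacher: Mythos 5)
The paper does not actually prove Theorem~\ref{sumofthreesquares}: it is imported as a classical result (Legendre's three-square theorem) with a citation to the literature, and everything in the paper downstream of it treats it as a black box. So there is nothing in the source to compare your argument against line by line; what you have written is the standard modern proof, and it is sound in outline. Your easy direction is complete as stated: squares are $0$, $1$ or $4$ modulo $8$, so $7$ is never hit, and the $2$-adic descent (all of $x,y,z$ even when $4\mid x^2+y^2+z^2$) strips off the powers of $4$. Your reduction to squarefree $n$ is also correct, the key point being that the odd square $d^2$ is $\equiv 1\pmod 8$ so it does not move $m$ between residue classes. For the hard direction you have correctly identified the two-step architecture --- rational representation via the local--global principle (or the Gauss--Dirichlet auxiliary-prime argument), then descent to an integral representation via the Davenport--Cassels lemma, whose hypothesis $\lvert\mathbf{v}-\mathbf{w}\rvert^2\le 3/4<1$ is exactly what makes $x^2+y^2+z^2$ special among ternary forms --- and you have correctly flagged Dirichlet's theorem on primes in arithmetic progressions as the unavoidable deep ingredient. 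The only caveat is that step (i) is left as a plan rather than executed: the verification of the local conditions at $p=2$ and the case analysis over $n\equiv 1,2,3,5,6\pmod 8$ are asserted rather than carried out. Since the paper itself does not supply these details either, this is an acceptable level of completeness for the role the theorem plays here, but it would not yet stand alone as a self-contained proof.
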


\bigskip

Any integer can be written uniquely in the form 
\begin{equation}
\label{rep}
2^{\gamma}Z  \,\,\,\, \text{where } \, Z \pmod{8} \in \{1, 3, 5, 7 \}. 
\end{equation}

In the case of $n!$, the value of $\gamma$ is given by Legendre's formula.

\bigskip

\begin{theorem}[Legendre's formula] 
\label{Legendreformula}
Let $n$ be a positive integer with binary representation $n = \sum_{k \geq 0} a_k 2^k$. Then the highest power of $2$ dividing $n!$ is $n - \sum_{k \geq 0} a_k$.
\end{theorem}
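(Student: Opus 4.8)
The plan is to reduce the statement to the classical sum-of-floors form of Legendre's formula for a general prime, namely that the exponent $v_2(n!)$ of $2$ in the factorisation of $n!$ equals $\sum_{i \geq 1} \lfloor n/2^i \rfloor$, and then to specialise to the prime $2$ and rewrite this sum in terms of the binary digits $a_k$. First I would prove the floor formula by a direct counting argument: for $1 \leq m \leq n$ the exponent $v_2(m)$ is exactly the number of indices $i \geq 1$ with $2^i \mid m$, so summing over $m$ and interchanging the two (finite) summations gives
\[
v_2(n!) = \sum_{m=1}^{n} v_2(m) = \sum_{i \geq 1} \#\{\, m : 1 \leq m \leq n,\ 2^i \mid m \,\} = \sum_{i \geq 1} \left\lfloor \frac{n}{2^i} \right\rfloor ,
\]
where only finitely many terms are nonzero because $2^i > n$ once $i$ is large.

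Next I would evaluate each floor through the binary expansion. Writing $n = \sum_{k < i} a_k 2^k + 2^i \sum_{k \geq i} a_k 2^{k-i}$ and noting that the first piece is smaller than $2^i$ while the second is a multiple of $2^i$, one gets $\lfloor n/2^i \rfloor = \sum_{k \geq i} a_k 2^{k-i}$. Substituting this and interchanging summations once more, so that for fixed $k$ the index $i$ runs over $1 \leq i \leq k$,
\[
\sum_{i \geq 1} \left\lfloor \frac{n}{2^i} \right\rfloor = \sum_{i \geq 1} \sum_{k \geq i} a_k 2^{k-i} = \sum_{k \geq 1} a_k \sum_{i=1}^{k} 2^{k-i} = \sum_{k \geq 1} a_k \left( 2^k - 1 \right) .
\]
Splitting the last sum as $\sum_{k \geq 1} a_k 2^k - \sum_{k \geq 1} a_k = (n - a_0) - \bigl( \sum_{k \geq 0} a_k - a_0 \bigr) = n - \sum_{k \geq 0} a_k$ then finishes the proof.

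None of these steps is genuinely difficult; the only points needing a word of justification are that the summations may be reordered (immediate, since all of them are finite) and the geometric-series evaluation $\sum_{i=1}^{k} 2^{k-i} = 2^k - 1$. As an alternative one could proceed by induction on $n$: from $v_2(n!) = v_2((n-1)!) + v_2(n)$ the claim is equivalent to the identity $s_2(n-1) - s_2(n) = v_2(n) - 1$, where $s_2$ is the binary digit sum, and this identity simply records the carries created when $1$ is added to $n-1$ in base $2$ (a trailing block of $t$ ones becomes $t$ zeros and the next $0$ becomes a $1$, so the digit sum drops by $t-1$ while $v_2(n)=t$). I expect the counting proof to be the cleanest to write out in full, with the induction mentioned only as a remark.
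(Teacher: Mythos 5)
Your proof is correct. Note that the paper does not prove this statement at all: Legendre's formula is quoted as a classical result (it is the standard $\nu_p(n!) = \sum_{i\geq 1}\lfloor n/p^i\rfloor$ specialised to $p=2$ and rewritten via the digit sum), so there is no in-paper argument to compare against. Your derivation is the standard one and every step checks out: the double-counting identity $v_2(n!)=\sum_{i\geq 1}\lfloor n/2^i\rfloor$, the evaluation $\lfloor n/2^i\rfloor=\sum_{k\geq i}a_k2^{k-i}$, the interchange of the two finite sums, the geometric series $\sum_{i=1}^{k}2^{k-i}=2^k-1$, and the final telescoping to $n-\sum_{k\geq 0}a_k$. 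The inductive alternative via the carry count when passing from $n-1$ to $n$ is also sound.
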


\bigskip

We will introduce some notation to assist in the calculation of the value of $Z \pmod{8}$ in (\ref{rep}).

Let $n \in \mathbb{N}$, with binary representation given by $n = \sum_{k \geq 0} a_k 2^k$, where all but finitely many $a_i$ are zero. For $i \in \{3,5,7 \}$, define $\alpha_i = \alpha_i(n)$ by

\begin{align}
\label{alpha3}
\alpha_3 = \alpha_3(n) &:= \#\Big\{ k \geq 0: \sum_{i=k}^{k+2} a_i 2^{i-k} \in \{3,4 \}  \Big\} \\
\label{alpha5}
\alpha_5 = \alpha_5(n) &:= \#\Big\{ k \geq 0: \sum_{i=k}^{k+2} a_i 2^{i-k} \in \{5,6 \} \Big\} \\
\label{alpha7}
\alpha_7 = \alpha_7(n) &:= \#\Big\{ k \geq 0: \sum_{i=k}^{k+2} a_i 2^{i-k} = 7 \Big\}.
\end{align}

For $n, x \in \mathbb{N}$ we also define:

$$
A(n, x) := \max_k \{k: 2^k x \leq n \} + 1.
$$

\bigskip

Finally, for $i \in \{1,3,5,7 \}$ and $k \in \mathbb{N}$ we define $A_{i,k}(n)$ and $A_i(n)$ by
\begin{align*}
A_{i,k}(n) :=& \,\, \# \{ x: x \equiv i \pmod{8}, \,\, 2^k x \, \leq \, n \} \\
A_i(n) :=& \sum_{x \, \equiv \, i \pmod{8} } A(n, x).
\end{align*}

\bigskip
We now give some technical lemmas that describe how the various definitions above connect with each other. 
\bigskip

\begin{lemma}
\label{Ais}
For each $i \in \{1,3,5,7 \}$ we have:
$$
A_i(n) = \sum_{k \geq 0} A_{i,k}(n)
$$
\end{lemma}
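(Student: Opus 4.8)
The plan is to prove the identity by double counting: both sides count the number of pairs $(x,k)$ with $x$ a positive integer satisfying $x \equiv i \pmod 8$, with $k \geq 0$ an integer, and with $2^k x \leq n$. Write $P_i(n)$ for this set of pairs. Note first that $P_i(n)$ is finite, since $2^k x \leq n$ forces $1 \leq x \leq n$ and $0 \leq k \leq \log_2 n$, so all the sums below are finite sums and may be rearranged freely.

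Next I would evaluate $|P_i(n)|$ in the two natural ways. Grouping the pairs in $P_i(n)$ by their second coordinate $k$, the pairs with second coordinate $k$ are exactly the $x$ with $x \equiv i \pmod 8$ and $2^k x \leq n$, of which there are $A_{i,k}(n)$ by definition; hence $|P_i(n)| = \sum_{k \geq 0} A_{i,k}(n)$. Grouping instead by the first coordinate $x$, for a fixed positive integer $x$ with $x \equiv i \pmod 8$ the set $\{k \geq 0 : 2^k x \leq n\}$ is an initial segment of $\mathbb{N}$ (because $2^k x$ is increasing in $k$): if $x \leq n$ it equals $\{0, 1, \dots, m\}$ with $m = \max_k\{k : 2^k x \leq n\}$, so it has $m + 1 = A(n,x)$ elements, and if $x > n$ it is empty and contributes $0 = A(n,x)$. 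Summing over $x$ then gives $|P_i(n)| = \sum_{x \, \equiv \, i \pmod 8} A(n,x) = A_i(n)$, and comparing the two expressions yields $A_i(n) = \sum_{k \geq 0} A_{i,k}(n)$.

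There is essentially no serious obstacle here; the only point needing a word of care is the interpretation of $A(n,x)$ when $x > n$, where the defining formula $\max_k\{k : 2^k x \leq n\} + 1$ involves a maximum over the empty set. One simply records the convention (forced by the double-counting interpretation) that $A(n,x) = 0$ in that case, so that such $x$ contribute nothing to $A_i(n)$; with that understood the argument is a routine Fubini/Tonelli interchange applied to a finite sum.
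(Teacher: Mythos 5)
Your proof is correct and is essentially the same double-counting argument the paper gives: each $x \equiv i \pmod 8$ contributes $A(n,x)$ to the left side and lies in exactly $A(n,x)$ of the sets counted by the $A_{i,k}(n)$. Your explicit remark on the convention $A(n,x)=0$ for $x>n$ is a sensible clarification the paper leaves implicit, but the substance is identical.
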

\begin{proof}
Fix $i$. For each $x \equiv i \pmod{8}$, if $A(n, x) = m$, then $x$ is also counted in $m$ of the sets $\{ x: x \equiv i \pmod{8}, \,\, 2^k x \leq n \}$.
\end{proof}
\bigskip
\begin{lemma}
\label{mod2}
Let $n \in \mathbb{N}$ with binary representation given by $n = \sum_{k \geq 0} a_k 2^k$ where all but finitely many $a_i$ are zero. Then,
$$
A_{i,k}(n)  \pmod{2} \, \equiv \,
\begin{cases}
    a_{k+3} , & \text{if } \, \, \sum_{j = k}^{k+2} 2^{j-k} a_j \, < \, i \\
    a_{k+3} + 1, & \text{if } \, \, \sum_{j = k}^{k+2} 2^{j-k} a_j \, \geq \. i.
\end{cases}
$$
\end{lemma}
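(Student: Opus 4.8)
The plan is to reduce $A_{i,k}(n)$ to a counting problem on an interval and then read off its parity from the binary digits of $n$. First I would note that the constraint $2^k x \le n$ is equivalent to $x \le \lfloor n/2^k\rfloor =: m$, so $A_{i,k}(n)$ is exactly the number of positive integers in $[1,m]$ congruent to $i$ modulo $8$. Writing $m = 8q + r$ with $0 \le r \le 7$, the integers of $[1,8q]$ contribute exactly $q$ such values (one per block of eight consecutive integers), while the leftover block $\{8q+1,\dots,8q+r\}$, whose residues mod $8$ are precisely $1,2,\dots,r$, contributes one additional value exactly when $r \ge i$. Hence $A_{i,k}(n) = q + [\,r \ge i\,]$, where $[\cdot]$ denotes the Iverson bracket.

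Next I would translate $q$ and $r$ back into the binary digits of $n$. Since $\lfloor n/2^k\rfloor = \sum_{j \ge k} a_j 2^{j-k}$, we get $r = m \bmod 8 = a_k + 2a_{k+1} + 4a_{k+2} = \sum_{j=k}^{k+2} 2^{j-k} a_j$ and $q = \lfloor m/8\rfloor = \sum_{j \ge 0} a_{k+3+j} 2^{j}$; in particular the parity of $q$ is its least significant bit $a_{k+3}$. Reducing the identity $A_{i,k}(n) = q + [\,r \ge i\,]$ modulo $2$ then gives $A_{i,k}(n) \equiv a_{k+3} + [\,r \ge i\,] \pmod 2$, and since $r = \sum_{j=k}^{k+2} 2^{j-k} a_j$, the two cases $r < i$ and $r \ge i$ are precisely the two cases in the statement.

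The only points needing a little care are bookkeeping ones: that $x$ ranges over positive integers, so that $x = 0$ (which is never $\equiv i \pmod 8$ for $i \in \{1,3,5,7\}$) is harmless, and that the ``one per block of eight'' count is exact because each residue class mod $8$ meets $\{8t+1,\dots,8t+8\}$ in exactly one element; the degenerate case $q=0$ (i.e.\ $m<8$) is covered by the same formula. Beyond that the argument is a direct computation, so I do not anticipate a serious obstacle — the substantive content is just the split $m = 8q+r$ and the identification of $q \bmod 2$ with $a_{k+3}$.
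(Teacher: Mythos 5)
Your proposal is correct and follows essentially the same route as the paper: both split $\lfloor n/2^k\rfloor$ as $8q+r$ (the paper's $8v+w$), count one residue per block of eight plus an extra term when $r \ge i$, and then read the parity of $q$ off as $a_{k+3}$ from the binary expansion. Your write-up just makes the intermediate identity $A_{i,k}(n) = q + [\,r \ge i\,]$ and the bookkeeping about $x\ge 1$ slightly more explicit than the paper does.
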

\begin{proof}
In general, for fixed $v, w \in  \mathbb{N}$ with $0 \leq w < 8$, 
$$
\# \{x: x \leq 8v + w: x \equiv i \pmod{8} \, \} \, = \,
\begin{cases}
    v , & \text{if } \, \, w \, < \, i \\
    v + 1, & \text{if } \, \, w \, \geq \, i.
\end{cases}
$$
Taking into account the binary representation of $\lfloor n/2^k \rfloor$, we then have,
$$
A_{i,k}(n)  =
\begin{cases}
    \sum_{j \geq k+3} 2^{j-k-3} a_j, & \text{if } \, \, \sum_{j = k}^{k+2} 2^{j-k} a_j < i \\
    \sum_{j \geq k+3} 2^{j-k-3} a_j + 1, & \text{if } \, \, \sum_{j = k}^{k+2} 2^{j-k} a_j \geq i.
\end{cases}
$$
The lemma follows by evaluating this expression modulo $2$.
\end{proof}

\bigskip

\begin{corollary}
\label{cor}
$A_i (n)  \pmod{2} \equiv \sum_{k \geq 0} a_{k+3} + \# \{ k: k \geq 0: \sum_{j = k}^{k+2} 2^{j-k} a_j \geq i \}$.
\end{corollary}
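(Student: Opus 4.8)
The plan is to derive Corollary \ref{cor} directly from Lemma \ref{Ais} and Lemma \ref{mod2} by summing the congruence of Lemma \ref{mod2} over all $k \geq 0$. Concretely, Lemma \ref{Ais} gives $A_i(n) = \sum_{k \geq 0} A_{i,k}(n)$, and since the sum is finite (all but finitely many $a_j$ vanish, so $A_{i,k}(n) = 0$ for $k$ large), I can reduce modulo $2$ term by term.

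First I would observe that Lemma \ref{mod2} says $A_{i,k}(n) \equiv a_{k+3} + \varepsilon_k \pmod 2$, where $\varepsilon_k = 1$ if $\sum_{j=k}^{k+2} 2^{j-k} a_j \geq i$ and $\varepsilon_k = 0$ otherwise; that is, $\varepsilon_k$ is exactly the indicator of membership in the set $\{k : k \geq 0,\ \sum_{j=k}^{k+2} 2^{j-k} a_j \geq i\}$. Then I would sum over $k$: $A_i(n) = \sum_{k \geq 0} A_{i,k}(n) \equiv \sum_{k \geq 0} a_{k+3} + \sum_{k \geq 0} \varepsilon_k \pmod 2$. The first sum is $\sum_{k \geq 0} a_{k+3}$ as written, and the second sum equals the cardinality $\#\{k : k \geq 0,\ \sum_{j=k}^{k+2} 2^{j-k} a_j \geq i\}$, which is exactly the claimed expression.

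There is essentially no obstacle here — this is a one-line consequence of the two preceding results — but the one point to handle with a word of care is the finiteness of the sums, so that ``reduce modulo $2$ termwise'' is legitimate: one should note that for all sufficiently large $k$ we have $a_{k+3} = 0$ and $\sum_{j=k}^{k+2} 2^{j-k} a_j = 0 < i$, hence $A_{i,k}(n) = 0$, so only finitely many terms contribute and the set being counted is finite. With that remark in place the congruence follows immediately by adding the per-$k$ congruences of Lemma \ref{mod2}.
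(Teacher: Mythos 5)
Your proof is correct and is exactly the argument the paper intends: the corollary is stated immediately after Lemmas \ref{Ais} and \ref{mod2} with no separate proof, precisely because it follows by summing the congruence of Lemma \ref{mod2} over $k$ and invoking Lemma \ref{Ais}. Your added remark on finiteness of the sums is a sensible (if routine) point of care that the paper leaves implicit.
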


\bigskip

We now have our main result.

\bigskip

\begin{theorem}
\label{mainthm}
Let $n \in \mathbb{N}$ with binary representation given by $n = \sum_{k \geq 0} a_k 2^k$, where all but finitely many $a_i$ are zero. If $\gamma$ is the highest power of $2$ dividing $n!$, then $n! = 2^{\gamma}Z$, where $Z$ satisfies 
\begin{align*}
Z \equiv 3^{\alpha_3(n)}(-1)^{\alpha_5(n)} \pmod{8}
\end{align*}
\end{theorem}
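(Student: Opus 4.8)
The plan is to compute the odd part $Z$ of $n!$ as an explicit product of odd integers, reduce that product modulo $8$, and then identify the exponents using Corollary~\ref{cor}. Write each $m \in \{1,\dots,n\}$ as $m = 2^{v_2(m)} x_m$ with $x_m$ odd. Then $n! = 2^{\gamma}\prod_{m=1}^{n} x_m$, where $\gamma = \sum_{m \le n} v_2(m)$ is exactly the exponent supplied by Theorem~\ref{Legendreformula}, so $Z = \prod_{m=1}^{n} x_m$. A fixed odd integer $x$ equals $x_m$ precisely for the integers $m = 2^k x \le n$ with $k \ge 0$, of which there are $A(n,x)$ (with the convention $A(n,x) = 0$ once $x > n$). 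Hence
$$
Z \;=\; \prod_{\substack{x \ge 1 \\ x \text{ odd}}} x^{A(n,x)}.
$$

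Next I would reduce modulo $8$. Every odd integer satisfies $x^2 \equiv 1 \pmod 8$, so $x^{A(n,x)} \bmod 8$ depends only on $x \bmod 8$ and on the parity of $A(n,x)$. Grouping the factors by the residue of $x$ modulo $8$ and using $A_i(n) = \sum_{x \equiv i} A(n,x)$ together with Lemma~\ref{Ais}, one gets
$$
Z \;\equiv\; 3^{A_3(n)}\, 5^{A_5(n)}\, 7^{A_7(n)} \pmod 8,
$$
the $i=1$ factor being trivial. Since $5 \equiv -3$ and $7 \equiv -1 \pmod 8$, this becomes
$$
Z \;\equiv\; 3^{A_3(n) + A_5(n)}\,(-1)^{A_5(n) + A_7(n)} \pmod 8,
$$
and because $3^2 \equiv 1 \pmod 8$ it suffices to determine $A_3 + A_5$ and $A_5 + A_7$ modulo $2$.

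For the final step, set $w_k := \sum_{j=k}^{k+2} 2^{j-k} a_j \in \{0,\dots,7\}$ and $S := \sum_{k \ge 0} a_{k+3}$. Corollary~\ref{cor} says $A_i(n) \equiv S + \#\{k \ge 0 : w_k \ge i\} \pmod 2$ for each $i \in \{3,5,7\}$, so the $S$-contributions cancel in both sums. For a fixed $k$, exactly one of the conditions $w_k \ge 3$, $w_k \ge 5$ holds when $w_k \in \{3,4\}$, and otherwise either none or both hold; hence $\#\{k : w_k \ge 3\} + \#\{k : w_k \ge 5\} \equiv \#\{k : w_k \in \{3,4\}\} = \alpha_3(n) \pmod 2$, and likewise $\#\{k : w_k \ge 5\} + \#\{k : w_k = 7\} \equiv \#\{k : w_k \in \{5,6\}\} = \alpha_5(n) \pmod 2$. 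Combining with the previous display gives $Z \equiv 3^{\alpha_3(n)}(-1)^{\alpha_5(n)} \pmod 8$. The only step that needs genuine care is the first one — checking that regrouping $\prod_{m \le n} x_m$ by odd part is exactly $\prod_x x^{A(n,x)}$, and pinning down the edge convention for $A(n,x)$ — after which the mod-$8$ collapse of squares and the elementary counting in the last paragraph are routine.
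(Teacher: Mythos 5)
Your proposal is correct and follows essentially the same route as the paper: the same decomposition $Z=\prod_{x\ \mathrm{odd}}x^{A(n,x)}$, the same reduction modulo $8$ via Lemma~\ref{Ais} and Corollary~\ref{cor}, and the same parity counting to extract $\alpha_3$ and $\alpha_5$. The only difference is cosmetic --- you rewrite $5\equiv-3$ and $7\equiv-1$ before counting, whereas the paper expands the counts as $\alpha_3+\alpha_5+\alpha_7$, $\alpha_5+\alpha_7$, $\alpha_7$ and then collapses $3\cdot5\cdot7\equiv1\pmod 8$.
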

\begin{proof}
Separating $n!$ into odd and even factors, we have
\begin{align*}
n! \,\, =& \,\, 2^{\gamma} \prod_{x \,\, odd \,  ; x \, \leq \, n} x^{A(n,x)} \\
     =& \,\, 2^{\gamma} \prod_{i \, \in \, \{1,3,5,7 \} } Z_i
\end{align*}

where,

\begin{align*}
Z_i = \prod_{x \, \equiv \, i  \pmod{8}} x^{A(n,x)} 
\end{align*}

We are interested in $Z_i  \pmod{8}$. By lemma \ref{Ais},

\begin{align*}
Z_i  \pmod{8} =& i^{\sum_{x \, \equiv \, i \pmod{8}} A(n,x)} \\
                      =&  i^{A_i (n)}
\end{align*}

Since $i^2 \equiv 1 \pmod{8}$, we have, by Corollary \ref{cor},
$$
Z_i  \pmod{8} = i^{\sum_{k \geq 0} a_{k+3} \, + \, \# \{ k: k \geq 0: \, \sum_{j = k}^{k+2} 2^{j-k} a_j \, \geq \, i \}}.
$$
Putting everything together, and using the definitions of $\alpha_3$, $\alpha_5$ and $\alpha_7$ in (\ref{alpha3}), (\ref{alpha5}), (\ref{alpha7}), we have, $n! = 2^{\gamma}Z$, where 

\begin{align*}
Z \pmod{8} \equiv & \prod_{i \in \{1, 3, 5, 7 \}} Z_i \pmod{8} \\
                           =& \,\, \prod_{i} i^{\sum_{k \geq 0} a_{k+3} \, + \,  \# \{ k: \, k \, \geq \, 0: \sum_{j = k}^{k+2} 2^{j-k} a_j \, \geq \, i \, \}} \\
                           =& \,\, (3 \times 5 \times 7)^{\sum_{k \geq 0} a_{k+3}} \times 3^{\alpha_3 + \alpha_5 + \alpha_7} \times 5^{\alpha_5 + \alpha_7} \times 7^{\alpha_7} \\
                           =& \,\, 3^{\alpha_3} \times (3 \times 5)^{\alpha_5} \\
                          = & \,\, 3^{\alpha_3} \times (-1)^{\alpha_5}.
\end{align*}
\end{proof}
\bigskip

\begin{corollary}
If $n \in \mathbb{N}$, then $n!$ cannot be written as a sum of three squares if and only if $\gamma$ and $\alpha_3$ are even and $\alpha_5$ is odd.
\end{corollary}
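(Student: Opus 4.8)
The plan is to combine Legendre's three-square theorem (Theorem~\ref{sumofthreesquares}) with the explicit description of $Z \bmod 8$ supplied by Theorem~\ref{mainthm}. First I would record the elementary equivalence: an integer $m$ has the form $4^a(8b+7)$ with $a,b \geq 0$ if and only if its $2$-adic valuation is even, say equal to $2a$, and its odd part is $\equiv 7 \pmod 8$. Indeed $8b+7$ is odd, so $4^a(8b+7)$ has even valuation and odd part $\equiv 7$; conversely if $m = 2^{2a} Z$ with $Z \equiv 7 \pmod 8$, then $Z = 8b+7$ for some $b \geq 0$ and $m = 4^a(8b+7)$. Applying this to $m = n! = 2^{\gamma} Z$ as in~(\ref{rep}), we conclude that $n!$ fails to be a sum of three squares precisely when $\gamma$ is even and $Z \equiv 7 \pmod 8$.

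The second step is to determine, via Theorem~\ref{mainthm}, exactly when $Z \equiv 3^{\alpha_3(n)}(-1)^{\alpha_5(n)} \equiv 7 \pmod 8$. Since $3^2 \equiv 1 \pmod 8$, the residue $3^{\alpha_3} \bmod 8$ depends only on the parity of $\alpha_3$, taking the value $1$ when $\alpha_3$ is even and $3$ when $\alpha_3$ is odd; likewise $(-1)^{\alpha_5} \bmod 8$ equals $1$ or $7$ according as $\alpha_5$ is even or odd. Tabulating the four parity combinations, $3^{\alpha_3}(-1)^{\alpha_5} \bmod 8$ is $1, 7, 3, 5$ in the cases (even,\,even), (even,\,odd), (odd,\,even), (odd,\,odd) respectively. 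Hence $Z \equiv 7 \pmod 8$ if and only if $\alpha_3$ is even and $\alpha_5$ is odd.

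Putting the two steps together yields the corollary: $n!$ is not a sum of three squares if and only if $\gamma$ is even, $\alpha_3$ is even, and $\alpha_5$ is odd. I expect no genuine obstacle; the only point needing a little care is the first step, namely the clean translation of the shape $4^a(8b+7)$ into the two separate conditions "$\gamma$ even" and "$Z \equiv 7 \pmod 8$", which rests on the fact that the factor $4^a$ affects neither the parity of the $2$-adic valuation nor the residue of the odd part modulo $8$.
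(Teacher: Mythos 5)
Your proposal is correct and follows essentially the same route as the paper: both arguments factor $n!$ as a power of $4$ times a residual part, invoke Theorem \ref{sumofthreesquares}, and use $3^2 \equiv (-1)^2 \equiv 1 \pmod 8$ to reduce the condition $Z \equiv 7 \pmod 8$ to the stated parities of $\alpha_3$ and $\alpha_5$. Your version merely spells out more explicitly the translation of the shape $4^a(8b+7)$ into ``even $2$-adic valuation and odd part $\equiv 7 \pmod 8$'' and the four-case parity table, both of which the paper leaves implicit.
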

\begin{proof}
Fix $n$, let $\gamma$ denote the highest power of $2$ dividing $n!$ and write $\bar{\gamma} = \gamma \pmod{2}$. Then, the result from Theorem \ref{mainthm} can be rewritten as 
\begin{equation}
\label{4xZ}
n! = 4^{x} Z, \,\, \text{ where } \,\, x = (\gamma - \bar{\gamma})/2 
\end{equation}
\bigskip

and $Z$ satisfies 

\bigskip

\begin{equation}
\label{Z}
Z \equiv 2^{\bar{\gamma}} \, 3^{\alpha_3} \, (-1)^{\alpha_5} \pmod{8}.
\end{equation}

\bigskip

The corollary follows from the identities $3^2 \equiv (-1)^2 \equiv 1 \pmod{8}$ and theorem \ref{sumofthreesquares}.
\end{proof}

\bigskip

\begin{remark}
We provide here a few applications of Theorem \ref{mainthm}. 
\bigskip

If $n = 2^k + w$ where $k \geq 5$ and $0 \leq w < 8$, then, from (\ref{Z}), $n! = 4^{x} Z$ where,

$$
Z  \pmod{8} \equiv
\begin{cases}
   1, & \text{if } \, \, w \in \{3, 4 \} \\
    2, & \text{if } \, \, w = 7 \\
    3, & \text{if } \, \, w = 2 \\
    5, & \text{if } \, \, w = 5 \\
    6, & \text{if } \, \, w \in \{ 0, 1, 6 \}.
\end{cases}
$$

\bigskip

If $n = \frac{2}{3} \, (16^k - 1)$ for $k \geq 1$, then $n!$ cannot be written as a sum of three squares. The binary representation of $n$ is $(10101010 ... 1010)_2 = (1010)_2 \times \sum_{i = 0}^k 16^i$, so $\gamma$ is even, $\alpha_3 = 0$ and $\alpha_5$ is odd. Similarly, if $n = \frac{4}{5} \, (16^{2k+1} - 1)$ for $k \geq 0$, then $n!$ cannot be written as a sum of three squares. Numbers of this form can be written as $(1100)_2 \times \sum_0^{2k} 16^i$. 

If $n$ is divisible by $4$ and $n!$ cannot be written as a sum of three squares, then neither can $(2n)!$. Adding a $0$ as the least significant digit of such an $n$ has no effect on the values of $\gamma$, $\alpha_3$ and $\alpha_5$. A more general statement is that, when $n$ is divisible by $4$, the value of $Z \pmod{8}$ in (\ref{4xZ}) is the same for both $n$ and $2n$.

Let $n = \sum_{k = 0}^r a_k 2^k$ and $m = n + 36 \times 2^{r+1}$. If $n!$ cannot be written as a sum of three squares then neither can $m!$. The binary representation of $m$ is obtained by adding $(100100)_2$ to the most significant end of the binary representation of $n$. Again, a more general statement is that the value of $Z \pmod{8}$ in (\ref{4xZ}) is the same for both $n$ and $m$.

\bigskip

Equation (\ref{Z}) suggests that the probability that $n!$ cannot be written as a sum of three squares is $1/8$. This is supported by numerical results. A proof would require that the values of $\gamma$, $\alpha_3$ and $\alpha_5 \pmod{2}$ are independent in a suitable sense. More generally, (\ref{Z}) can be used to derive heuristic estimates for the expected asymptotic values of $Z  \pmod{8}$. Table \ref{tablez} provides these estimates and compares them to the actual proportions for $n \leq 1000000$.

\bigskip

\begin{table}[htbp]
  \centering
   \begin{tabular}{ | c | c | p{3cm} |}
 \hline
$Z \pmod{8}$   & Actual & Estimate \\  \hline
1 & 0.124967 & 0.125  \\
2 & 0.249445 & 0.25 \\ 
3 & 0.124968 & 0.125 \\ 
5 & 0.125032 & 0.125 \\
6 & 0.250556 & 0.25 \\
7 & 0.125032 & 0.125 \\ \hline
  \end{tabular}
  \caption{Table of actual and estimated values of $Z \pmod{8}$ for $n \leq 1,000,000$.}
  \label{tablez}
\end{table}

\bigskip 

\end{remark}

\bigskip

\section{Building the automaton}
\label{automaton}

Let $n \in \mathbb{N}$ with $n! = 4^x Z$ as in (\ref{4xZ}). We will build an automaton which takes the binary digits of $n = \sum_{k \geq 0} a_k 2^k$ as input, starting with the least significant, $a_0$, and determines the value of $Z \pmod{8}$. In particular, the automaton determines whether $n!$ can be written as the sum of three squares. The automaton is constructed as the product of three separate automata, which keep track of the parity of $\gamma$, $\alpha_3$ and $\alpha_5$.

\subsection{Automaton for the parity of $\gamma$}
The automaton which calculates the parity of $\gamma$ is straightforward. It consists of two states which we will call $g_0$ and $g_1$. The state $g_0$ indicates that $\gamma$ is even and $g_1$ indicates that $\gamma$ is odd. From Theorem \ref{Legendreformula}, 
$$
\gamma \pmod{2} = (n - \sum_{k \geq 0} a_k) \pmod{2} = \sum_{k \geq 1} a_k \pmod{2}.
$$
The automaton enters state $g_0$ after the least significant binary digit is received as $\gamma = 0$ in both of the possible cases. After this, the parity of $\gamma$ is changed if the new digit is a $1$ and unchanged if the new digit is a $0$. The automaton for the parity of $\gamma$ is pictured in figure \ref{autog}.

\bigskip

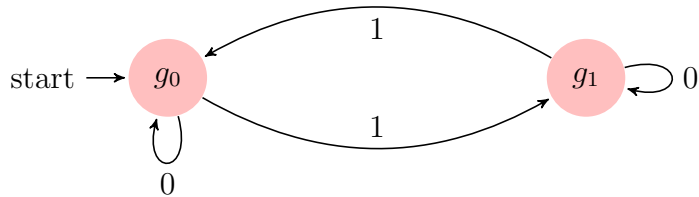
\begin{figure}[htbp]
\centering
\begin{tikzpicture}[->,>=stealth',shorten >=1pt,auto,node distance=5.5cm,
                    semithick]
  \tikzstyle{every state}=[fill=pink,draw=none,text=black]

  \node[initial,state] (A)                    {$g_0$};
  \node[state]         (B) [right of=A] {$g_1$};
  
  \path (A) edge [bend right] node {1} (B)
  (B) edge [bend right] node {1} (A)
  (A) edge [loop below] node {$0$} (A)
        (B) edge [loop right] node {$0$} (B);                      
 \end{tikzpicture}
                       \caption{Automaton for the parity of $\gamma$}
                        \label{autog}
\end{figure}

\bigskip

\subsection{Automaton for the parity of $\alpha_3$}

The automaton which keeps track of the parity of $\alpha_3$ consists of 8 states. We call the states $s_{x, y, z}$ where $x, y, z$ are binary digits. The binary digit $x$ is $0$ when $\alpha_3$ is even and $1$ when $\alpha_3$ is odd. The digit $y$ is the most recent binary digit to have been read in from $n$ and $z$ is the second most recent binary digit. In describing the action of the automaton, we will ignore the technicalities of what happens when the first two digits of $n$ are received. Adding a $0$ as the most significant digit of $n$ does not change the value of $\alpha_3$. So when the automaton receives a $0$ the state $s_{x, y, z}$ transitions to $s_{x, 0, y}$. In the language of automata, this is described as:
$$
(\, s_{x, y, z}, 0) \,\,  \to \, \, s_{x, 0, y} .
$$
\bigskip
The state transitions when the automaton receives a $1$ are not quite as simple because each new calculation of $\alpha_3$ has to take into account the previous calculation. The state transitions are as follows:

\begin{align*}
(\, s_{0, 0, 0}, 1) \,\,  & \,\, \to \,\, \,\, s_{1, 1, 0} \\
(\, s_{0, 0, 1}, 1) \,\,  & \,\, \to \,\, \,\, s_{0, 1, 0} \\
(\, s_{0, 1, 0}, 1) \,\,  & \,\, \to \,\, \,\, s_{1, 1, 1} \\
(\, s_{0, 1, 1}, 1) \,\,  & \,\, \to \,\, \,\, s_{0, 1, 1} \\
(\, s_{1, 0, 0}, 1) \,\,  & \,\, \to \,\, \,\, s_{0, 1, 0} \\
(\, s_{1, 0, 1}, 1) \,\,  & \,\, \to \,\, \,\, s_{1, 1, 0} \\
(\, s_{1, 1, 0}, 1) \,\,  & \,\, \to \,\, \,\, s_{0, 1, 1} \\
(\, s_{1, 1, 1}, 1) \,\,  & \,\, \to \,\, \,\, s_{1, 1, 1} . 
\end{align*}

\bigskip

\subsection{Automaton for the parity of $\alpha_5$}

The automaton which keeps track of the parity of $\alpha_5$ also consists of 8 states. We call the states $t_{x, y, z}$ where $x, y, z$ are binary digits. The binary digit $x$ is $0$ when $\alpha_5$ is even and $1$ when $\alpha_5$ is odd. The digit $y$ is the most recent binary digit to have been read in from $n$ and $z$ is the second most recent binary digit. The other comments from the previous section apply here as well. When a $0$ is received, the transition is described by

$$
(\, t_{x, y, z}, 0) \,\,  \to \, \, t_{x, 0, y} .
$$

The transitions when a $1$ is received are described by:

\begin{align*}
(\, t_{0, 0, 0}, 1) \,\,  & \,\, \to \,\, \,\, t_{0, 1, 0} \\
(\, t_{0, 0, 1}, 1) \,\,  & \,\, \to \,\, \,\, t_{1, 1, 0} \\
(\, t_{0, 1, 0}, 1) \,\,  & \,\, \to \,\, \,\, t_{1, 1, 1} \\
(\, t_{0, 1, 1}, 1) \,\,  & \,\, \to \,\, \,\, t_{0, 1, 1} \\
(\, t_{1, 0, 0}, 1) \,\,  & \,\, \to \,\, \,\, t_{1, 1, 0} \\
(\, t_{1, 0, 1}, 1) \,\,  & \,\, \to \,\, \,\, t_{0, 1, 0} \\
(\, t_{1, 1, 0}, 1) \,\,  & \,\, \to \,\, \,\, t_{0, 1, 1} \\
(\, t_{1, 1, 1}, 1) \,\,  & \,\, \to \,\, \,\, t_{1, 1, 1} . 
\end{align*}

\bigskip

\bibliographystyle{plain}
\begin{small}
\bibliography{Factorial}
\end{small}

\end{document}